\newtheorem{lemma}{Lemma}[section]
\newtheorem{applemma}{Lemma A\!\!}[section]
\newtheorem{theorem}{Theorem}[section]
\renewcommand{\leq}{\leqslant}
\renewcommand{\geq}{\geqslant}
\newtheorem{definition}{Definition}
\newtheorem{remark}{Remark}
\def\mR{{\mathbb R}}
\def\p{{\partial}} 
\def\ud{\,\mathrm{d}}
\def\e{\varepsilon}
\def\l{\left}
\def\r{\right}
\def\ud{\,\mathrm{d}}
\def\GRAD{\nabla}
\newcommand{\ie}{i.e.,\@\xspace}
\newcommand{\eg}{e.g.,\@\xspace}
\newcommand{\calD}{{\mathcal D}}
\newcommand{\calB}{{\mathcal B}}
\newcommand{\calC}{{\mathcal C}}
\newcommand{\calO}{{\mathcal O}}
\newcommand{\bfjlg}{\boldsymbol}         
\newcommand{\bx}{{\bfjlg x}}
\newcommand{\by}{{\bfjlg y}}
\newcommand{\be}{{\bfjlg e}}
\newcommand{\bnu}{{\bfjlg \nu}}
\newcommand{\bxi}{{\bfjlg \xi}}
\begin{document}
\thispagestyle{empty}

\title[Almost periodic solutions of Poisson's equation]
{Regularity of almost periodic solutions of Poisson's equation} 

\author{\`{E}. Muhamadiev, M. Nazarov}

\address{\`{E}rgash Muhamadiev
\newline\hphantom{iii} Department of Information Systems
  and Technology
\newline\hphantom{iii} Vologda State University 
\newline\hphantom{iii} Vologda, Russia}
\email{emuhamadiev@rambler.ru}

\address{Murtazo Nazarov
\newline\hphantom{iii} Division of Scientific Computing
\newline\hphantom{iii} Department of Information Technology
\newline\hphantom{iii} Uppsala University
\newline\hphantom{iii} Uppsala, Sweden}
\email{murtazo.nazarov@it.uu.se}

\thanks{\sc \`{E}. Muhamadiev, M. Nazarov
  Regularity of almost periodic solutions of Poisson's equation}
\thanks{\copyright \ 2020 \`{E}. Muhamadiev, M. Nazarov}
\thanks{\rm This material is based upon work supported by Esseen scholarship at Uppsala University.}

\maketitle 
{
\small
\begin{quote}

  \noindent{\bf Abstract. }
  This paper discusses some regularity of almost periodic solutions of the Poisson's equation $-\Delta u = f$ in $\mR^n$, where $f$ is an almost periodic function. It has been proved by Sibuya [{\em Almost periodic solutions of Poisson’s equation.} Proc. Amer. Math. Soc., 28:195--198, 1971.] that if $u$ is a bounded continuous function and solves the Poisson's equation in the distribution sense, then $u$ is an almost periodic function. In this work, we relax the assumption of the usual boundedness into boundedness in the sense of distribution which we refer to as {\em a bounded generalized function}. The set of bounded generalized functions are wider than the set of usual bounded functions. Then, upon assuming that $u$ is a bounded generalized function and solves the Poisson's equation in the distribution sense, we prove that this solution is bounded in the usual sense, continuous and almost periodic. Moreover, we show that the first partial derivatives of the solution $\p u/ \p x_i$, $i=1, \ldots, n$, are also continuous, bounded and almost periodic functions. The technique is based on extending a representation formula using Green's function for Poisson's equation for solutions in the distribution sense. Some useful properties of distributions are also shown that can be used to study other elliptic problems.
  
  \medskip

  
  \noindent{\bf Keywords:}  Poisson's equation, almost periodic solutions, generalized solutions
  
  \medskip
  \noindent{\bf Mathematics Subject Classification:} 35J, 35D
\end{quote}
}

\section{Introduction}
We are interested in solving the following Poisson's equation 
\begin{equation}\label{eq:poisson}
-\Delta u = f,
\end{equation}
where $u: \mR^n \rightarrow \mR$ is the solution and $f: \mR^n \rightarrow \mR$ is given almost periodic source function
and $\Delta = \text{div} \cdot \GRAD$ is the Laplace operator. A function $f$ is called an almost periodic of $\bx$ if $f$ is
continuous in $\mR^n$, and for every sequence of points $\{\bx_n\} \in \mR^n$, the corresponding sequence $\{ f(\bx + \bx_n) \}$ contains a uniformly convergent sub-sequence. Our interest is to study the behavior of the solution, which is obtained when the source function is almost periodic. 

The theory of almost periodic solutions of ordinary differential equations was started by early work of Bohr \& Neugebauer \cite{Bohr_1926} and Jean Favard \cite{Favard_1928, Favard_1933}.  Jean Favard proved the following theorem: if all homogeneous limit equations have no non-zero bounded solutions, and the original system has a bounded solution, then this solution is almost periodic. This result raised the problem of the existence of a bounded solution of a system with almost periodic coefficients.  Later, it was shown Muhamadiev \cite{Muhamadiev_1972}, that the property of limit systems mentioned in the theorem of Favard, guarantees the existence of a bounded solution of the original system, and consequently its almost periodicity. 

The question of the behavior of solutions of Poisson's equation with almost periodic source functions was first addressed by Sibuya \cite{Sibuya_1971}. Later, Sell in \cite{Sell_1973} extended the result of Sibuya and Favard to linear systems of partial differential equations with almost periodic coefficients and source functions. We remark here that, the extension of the theory of almost periodic solution to more general classes of differential equations such pseudo-differential operators were studied by Shubin \cite{Shubin_1978}.

The following theorem was proven by Sibuya \cite{Sibuya_1971}:
\medskip
\begin{theorem}[Sibuya]\label{th:sibuya}
  Let $f(\bx)$ be an almost periodic function of $\bx$ in $\mR^n$, and let $u(\bx)$ be a bounded continuous function of $\bx$ in $\mR^n$. Assume that $u(\bx)$ is a solution of \eqref{eq:poisson} in the sense of distribution. Then $u(\bx)$ is almost periodic with respect to   $\bx$ in $\mR^n$.
\end{theorem}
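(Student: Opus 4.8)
The plan is to show that $u$ inherits almost periodicity from $f$ by exploiting the translation-invariance of the Laplacian together with a suitable a priori estimate. Given any sequence $\{\bx_k\} \subset \mR^n$, I want to extract a subsequence along which $\{u(\bx + \bx_k)\}$ converges uniformly on $\mR^n$. Since $f$ is almost periodic, after passing to a subsequence we may assume $f(\bx + \bx_k) \to g(\bx)$ uniformly for some (almost periodic) $g$. The translated functions $u_k(\bx) := u(\bx + \bx_k)$ are uniformly bounded (by $\|u\|_\infty$) and each solves $-\Delta u_k = f(\cdot + \bx_k)$ in the distributional sense. The goal is then to upgrade this to uniform convergence of $\{u_k\}$.

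The key step is an interior regularity / compactness argument. On any fixed ball $B_R$, the right-hand sides $f(\cdot + \bx_k)$ are uniformly bounded in $L^\infty$, hence in $L^p_{\mathrm{loc}}$ for every $p < \infty$; combined with the uniform $L^\infty$ bound on $u_k$, elliptic $W^{2,p}$ estimates (or Schauder estimates, since $f$ is uniformly continuous one gets uniform local Hölder bounds on the $f_k$) give uniform bounds on $\|u_k\|_{C^{1,\alpha}(B_{R/2})}$. By Arzelà–Ascoli and a diagonal argument over an exhausting sequence of balls, a subsequence of $\{u_k\}$ converges in $C^1_{\mathrm{loc}}(\mR^n)$ to some bounded $v$ solving $-\Delta v = g$. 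The delicate point — and the main obstacle — is that $C^1_{\mathrm{loc}}$ convergence is not the same as uniform convergence on all of $\mR^n$: one must rule out "escape of oscillation to infinity." Here is where boundedness of $u$ must be used more carefully, e.g.\ via a Liouville-type argument: if two subsequential limits $v_1, v_2$ of the $u_k$ differed, their difference $w = v_1 - v_2$ would be a bounded harmonic function, hence constant; tracking the constant back through the construction (using that the convergence of $f_k$ is genuinely uniform, not merely local) forces the limit to be unique and the convergence to be uniform. An alternative, cleaner route is the representation-formula approach advertised in the abstract: write $u$ locally via the Green's function / Newtonian potential of $f$ plus a harmonic correction, show the potential part converges uniformly because $f_k \to g$ uniformly and the Newtonian kernel is locally integrable with the right decay, and control the harmonic part by its boundedness.

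Concretely I would carry out the steps in this order: (1) fix an arbitrary sequence $\{\bx_k\}$ and pass to a subsequence so that $f(\cdot + \bx_k)$ converges uniformly to some $g$; (2) establish uniform local elliptic estimates for $u_k(\bx) = u(\bx + \bx_k)$ using the uniform $L^\infty$ bounds on $u_k$ and $f_k$; (3) extract, via Arzelà–Ascoli and diagonalization, a $C^1_{\mathrm{loc}}$-convergent sub-subsequence with limit $v$; (4) prove the convergence is in fact uniform on $\mR^n$ — the crux — either by a uniqueness/Liouville argument for bounded solutions or by the Green's-function representation showing the oscillation of $u_k - u_\ell$ on a ball $B_R(\bx_0)$ is controlled by $\sup\|f_k - f_\ell\|$ plus a term that is small uniformly in $\bx_0$; (5) conclude that every sequence $\{u(\bx + \bx_k)\}$ has a uniformly convergent subsequence, which is exactly the definition of almost periodicity of $u$. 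The hard part will be step (4): converting local compactness into global uniform convergence without an a priori modulus of continuity that is uniform across all of $\mR^n$ — this is precisely where the structure of $-\Delta$ (its fundamental solution and the maximum principle) does the work, and it is the technical heart that the paper's Green's-function representation formula is designed to handle.
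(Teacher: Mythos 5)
First, for context: the paper does not prove this statement at all --- it is quoted from Sibuya's 1971 article and used as a black box in the proofs of the paper's own theorems. So your proposal has to be judged on its own merits rather than against an internal proof.

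Your steps (1)--(3) and (5) are standard and correct: Bochner's criterion, uniform interior $C^{1,\alpha}$ bounds on the translates $u_k$ from the uniform $L^\infty$ bounds on $u$ and $f$, and Arzel\`a--Ascoli with diagonalization. But you have (accurately) located the entire content of the theorem in step (4) and then left it unproved, and neither of the two routes you sketch for it works as stated. The Liouville route fails because knowing that two subsequential $C^1_{\mathrm{loc}}$ limits differ by a constant does not make that constant zero ($u$ and $u+c$ solve the same equation, so there is no uniqueness to appeal to without pinning a value and then propagating it globally), and even a unique local-uniform limit $v$ does not prevent $\sup_{\mR^n}|u_k - v|$ from staying bounded below --- the ``escape of oscillation to infinity'' you name is not ruled out by anything in your outline. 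The quantitative route is worse: the estimate it would require --- that for a solution $w$ of $-\Delta w=\rho$ with $\|w\|_\infty\leq 2\|u\|_\infty$, the size of $w$ is controlled by $\|\rho\|_\infty$ plus $|w(0)|$ plus a small error --- is false. Take $w(\bx)=M\sin\big(x_1\sqrt{\delta/M}\big)$ with $M=2\|u\|_\infty$ fixed: then $-\Delta w=\delta\sin\big(x_1\sqrt{\delta/M}\big)$ has sup norm $\delta$, $w(0)=0$, yet $\|w\|_\infty=M$ no matter how small $\delta$ is. Consequently no argument using only $\|f_k-f_\ell\|_\infty\rightarrow 0$, the uniform bound on $u_k-u_\ell$, and convergence at one point can close step (4); the almost periodicity of $f$ (compactness and minimality of its hull, or equivalently the mean-value/representation identities that convert differences of $u$ into averages of differences of $f$) must enter in an essential way. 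That is exactly the work Sibuya's published proof does, and it is absent here. As written, the proposal is a competent reduction of the theorem to its hardest step, not a proof of it.
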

\medskip

Theorem~\ref{th:sibuya} shows that if a bounded continuous function $u$ solves equation \eqref{eq:poisson} in the distribution 
sense, then it is almost periodic. The main goal of this paper is to address the following key questions: $(i)$ to investigate the possibility of relaxing the assumption of Theorem~\ref{th:sibuya}, \ie
consider a wider class of solutions rather than bounded continuous functions; $(ii)$ to investigate properties of the partial derivatives of such solutions, \ie boundedness, continuity, and almost periodicity.

We have to stress that in this paper we assume that the solution $u$ of the Poisson's equation is bounded in the distribution sense, whereas in \cite{Sibuya_1971} and \cite{Sell_1973} the solution is assumed to be bounded in the strong sense.

This paper is organized as follows. Some definitions including the definition of the bounded generalized functions are presented in Section~\ref{Sec:Preliminaries}.  Then we present our main results in Section~\ref{Sec:main_results}.  The proof of the main theorems is presented in Section~\ref{sec:proof:th}. Some technical lemmas are proved in detail in Section~\ref{Sec:tec_lemma} and in the appendix.

\subsection{Preliminaries}\label{Sec:Preliminaries} Throughout this paper we follow the notations and definitions consistent with \cite{Evans_2010}. Let $\Omega$ be an open subset of $\mR^n$, we denote the closure of it by $\bar\Omega$.  We often use an
open ball of radius $r>0$ centered at the point $\bx$ by $\calB^0(\bx, r) = \{\by \in \mR^n: |\bx-\by| < r\}$, and we denote by
$\calB(\bx, r) = \{\by \in \mR^n: |\bx-\by| \le r\}$ a closed ball.  For a given function $g(\bx) \equiv g(x_1, \ldots, x_n)$, $\bx\in \Omega$, we denote the normal derivative by $\frac{\p g}{\p \bnu} (\bx) = \bnu \cdot D g(\bx)$, where $\bnu$ is an
outward pointing unit normal to the boundary $\p\Omega$, and $Dg := (\frac{\p g}{ \p x_1}, \ldots, \frac{\p g}{ \p x_n})$ is the
gradient of $g$.

Furthermore, we use the so-called Green's functions to write the representation formula for the Poisson's equations, see \eg 
\cite{Evans_2010}. 
\medskip
\begin{definition}
Green's function for an open set $\Omega = \calB^0(\bx, 1)$ in $\mR^n$ is
\begin{equation}\label{eq:G(x,y)}
  G(\bx,\by) := \Phi(\by - \bx) - \Phi(|\bx| (\by - \tilde\bx)),
  \quad \big( (\bx, \by) \in \Omega, \bx \not= \by \big),
\end{equation}
where 
$
\tilde \bx = \frac{\bx}{|\bx|^2}
$
and
\begin{equation}\label{eq:Phi(x,y)}
  \Phi(\bx) = 
  \begin{cases}
      -\frac{1}{2n} \log(\bx), \quad &n = 2,\\
      \frac{1}{n(n-2)\alpha(n)} \frac{1}{|\bx|^{n-2}}, \quad &n\geq3.
    \end{cases}
  \end{equation}
  
\end{definition}

\medskip
\begin{theorem} (Representation formula)
  Let $\Omega = \calB^0(\bx, 1)$. 
  If $u \in \calC^2(\bar\Omega)$ is the solution of \eqref{eq:poisson}
  then the following holds
\begin{equation}\label{eq:sol}
  u(\bx) = -\int_{\p\Omega} u(\by) \frac{\p G}{\p\bnu}(\bx,\by) \ud S(\by)
  + \int_{\Omega} f(\by) G(\bx,\by) \ud \by \quad(\bx \in \Omega),
\end{equation}
where $\bnu$ is an outward normal vector, 
$\frac{\p G}{\p\bnu}(\bx,\by)$ is the normal derivative
of function $G(\bx, \by)$ at point $\by\in \p\Omega$.
\end{theorem}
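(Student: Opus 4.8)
Identity \eqref{eq:sol} is the classical Green representation formula for the unit ball, and the plan is to derive it from Green's second identity together with the standard excision of the pole of the fundamental solution. I would fix $\bx\in\Omega$ and, to lighten the notation, write $v(\by):=\Phi\bigl(|\bx|(\by-\tilde\bx)\bigr)$ for the corrector, so that $G(\bx,\by)=\Phi(\by-\bx)-v(\by)$. The computation on which the whole construction rests is the elementary identity $|\bx|\,|\by-\tilde\bx|=|\by-\bx|$, valid for every $\by$ with $|\by|=1$: one expands both squared norms using $\tilde\bx=\bx/|\bx|^2$. Since $\Phi$ is radial this yields $G(\bx,\by)=0$ for all $\by\in\p\Omega$. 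I would also record two standard facts used below: $\by\mapsto\Phi(\by-\bx)$ is smooth and harmonic on $\mR^n\setminus\{\bx\}$, and, because $|\tilde\bx|=1/|\bx|>1$ so that $\tilde\bx\notin\bar\Omega$, the corrector $v$ is smooth and harmonic on a neighbourhood of $\bar\Omega$, since $\Delta_{\by}\Phi\bigl(|\bx|(\by-\tilde\bx)\bigr)=|\bx|^2(\Delta\Phi)\bigl(|\bx|(\by-\tilde\bx)\bigr)=0$ there. (The degenerate value $\bx=0$ is handled by the same argument, the corrector then reducing to a constant.)

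\emph{Excision step.} For small $\e>0$ I would set $\Omega_\e:=\Omega\setminus\calB(\bx,\e)$ and apply Green's second identity on $\Omega_\e$ to the pair $u$ and $\by\mapsto\Phi(\by-\bx)$, which is harmonic there; with $-\Delta u=f$ the volume contribution is $\int_{\Omega_\e}\Phi(\by-\bx)f(\by)\ud\by$, and the boundary $\p\Omega_\e$ splits into $\p\Omega$ and the inner sphere $\p\calB(\bx,\e)$, on which $\bnu$ points towards $\bx$. Then I would let $\e\to0$. On $\p\calB(\bx,\e)$ one has $|\Phi(\by-\bx)|=O(\e^{2-n})$ for $n\geq3$ and $O(|\log\e|)$ for $n=2$, while $|\p u/\p\bnu|\leq\|Du\|_{\uL^\infty(\bar\Omega)}$ and the surface measure of $\p\calB(\bx,\e)$ is of order $\e^{n-1}$; hence $\int_{\p\calB(\bx,\e)}\Phi(\by-\bx)\,\tfrac{\p u}{\p\bnu}\,\ud S(\by)$ is $O(\e)$, respectively $O(\e|\log\e|)$, and vanishes. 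By the defining normalisation of $\Phi$ as the fundamental solution of $-\Delta$, a direct computation gives $\p\Phi(\by-\bx)/\p\bnu=\bigl(n\alpha(n)\e^{n-1}\bigr)^{-1}$ on $\p\calB(\bx,\e)$, so $\int_{\p\calB(\bx,\e)}u(\by)\,\tfrac{\p\Phi(\by-\bx)}{\p\bnu}\,\ud S(\by)$ equals the average of $u$ over $\p\calB(\bx,\e)$ and converges to $u(\bx)$ by continuity of $u$. Finally $\int_{\Omega_\e}\to\int_\Omega$, since $\Phi(\cdot-\bx)\in\uL^1(\Omega)$ and $f$ is bounded on $\bar\Omega$. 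This yields
\[
u(\bx)=-\int_{\p\Omega}u(\by)\,\tfrac{\p\Phi(\by-\bx)}{\p\bnu}\ud S(\by)+\int_{\p\Omega}\Phi(\by-\bx)\,\tfrac{\p u}{\p\bnu}(\by)\ud S(\by)+\int_{\Omega}\Phi(\by-\bx)f(\by)\ud\by .
\]

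\emph{Combining with the corrector.} Since $v$ is smooth and harmonic on $\bar\Omega$, Green's second identity applies directly on all of $\Omega$ to the pair $u$ and $v$ and, with $-\Delta u=f$, gives
\[
\int_{\Omega}v(\by)f(\by)\ud\by=\int_{\p\Omega}\Bigl(u(\by)\,\tfrac{\p v}{\p\bnu}(\by)-v(\by)\,\tfrac{\p u}{\p\bnu}(\by)\Bigr)\ud S(\by).
\]
Because $v(\by)=\Phi(\by-\bx)$ for $\by\in\p\Omega$, the term $\int_{\p\Omega}\Phi(\by-\bx)\,\tfrac{\p u}{\p\bnu}\,\ud S$ occurring in the identity just obtained equals $\int_{\p\Omega}v\,\tfrac{\p u}{\p\bnu}\,\ud S$, which by the preceding display equals $\int_{\p\Omega}u\,\tfrac{\p v}{\p\bnu}\,\ud S-\int_{\Omega}v f\,\ud\by$. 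Substituting this in, the right-hand side collapses to $-\int_{\p\Omega}u(\by)\,\tfrac{\p}{\p\bnu}\bigl(\Phi(\by-\bx)-v(\by)\bigr)\ud S(\by)+\int_{\Omega}\bigl(\Phi(\by-\bx)-v(\by)\bigr)f(\by)\ud\by$, which is exactly \eqref{eq:sol} since $G(\bx,\by)=\Phi(\by-\bx)-v(\by)$.

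\emph{Main obstacle.} All of the above is a bounded, routine application of the divergence theorem except for one point: the $\e\to0$ analysis of the inner boundary integral carrying $\p\Phi/\p\bnu$. One must establish the uniform formula for $\p\Phi(\by-\bx)/\p\bnu$ on the small spheres — with the correct constant $n\alpha(n)$ (the surface area of the unit sphere in $\mR^n$) and the correct sign forced by the orientation of $\bnu$ — so as to identify the limit of that integral with the spherical average of $u$, and hence with $u(\bx)$; this is precisely where the normalisation of $\Phi$ in \eqref{eq:Phi(x,y)} is used essentially. The companion facts — the algebraic identity $|\bx|\,|\by-\tilde\bx|=|\by-\bx|$ on $\{|\by|=1\}$ that forces $G$ to vanish on $\p\Omega$, and the harmonicity of the corrector $v$ away from $\tilde\bx$ — are short direct computations.
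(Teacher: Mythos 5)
Your proof is correct: it is the classical derivation of the Green's function representation for the ball via Green's second identity, excision of the singularity of $\Phi$ over a small ball around $\bx$, and the harmonic corrector that makes $G$ vanish on $\p\Omega$. The paper itself offers no proof of this theorem — it is quoted as a standard result (cf.\ Evans) — and your argument is precisely the textbook one, with the two essential computations (the identity $|\bx|\,|\by-\tilde\bx|=|\by-\bx|$ for $|\by|=1$, and the normalisation $\p\Phi(\by-\bx)/\p\bnu=(n\alpha(n)\e^{n-1})^{-1}$ on the excised sphere, which identifies the limit with $u(\bx)$) correctly isolated and carried out.
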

\medskip
 
We denote by $\calD(\mR^n)$ the set of all functions $\phi:\mR^n\rightarrow \mR^n$ such that $\phi$ is infinitely differentiable
and has compact support. A function $u(\bx)$ is said to be a generalized or weak solution of \eqref{eq:poisson} if it solves the following integral equation for every $\phi \in \calD (\mR^n)$:
\begin{equation}\label{eq:poisson:weak}
  -\int_{\mR^n} u \Delta \phi \ud \bx = \int_{\mR^n} f \phi \ud \bx.
\end{equation}

Next we give the definition of bounded generalized functions:
\medskip
\begin{definition}[Bounded generalized function]
  We say that the distribution $g(\bx)$ is a bounded generalized 
  function in $\mR^n$, if for any function 
  $\varphi\in \calD(\mR^n)$, the function 
  $
  (g*\varphi)(\bx) = (g(\by), \varphi(\bx-\by)) 
  $
  is bounded in $\mR^n$, \ie
  $
  \sup|(g*\varphi)(\bx)| < \infty.
  $
\end{definition}
\medskip

It can be easily observed that the set of bounded generalized functions contains the set of usual bounded functions.

\section{Main results}\label{Sec:main_results}
In this section, we formulate the main results of this paper. The first result extends the results of Sibuya \cite{Sibuya_1971}.
We prove that under the same assumptions of Theorem~\ref{th:sibuya}, the function $u$ is continuously differentiable and
it's partial derivatives are almost periodic. 
\medskip
\begin{theorem}\label{th:1}
  Let $f(\bx)$ be an almost periodic function of $\bx$ in $\mR^n$, and let $u(\bx)$ be a bounded continuous function of $\bx$ in $\mR^n$. Assume that $u(\bx)$ is a solution of \eqref{eq:poisson} in the sense of distribution. Then $u$ has continuous partial derivatives $\p u/\p x_i$, which are almost periodic functions of $\bx$ in $\mR^n$.
\end{theorem}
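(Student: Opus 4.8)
The plan is to reduce Theorem~\ref{th:1} to Theorem~\ref{th:sibuya} applied not to $u$ itself but to the difference quotients (or directly to the derivatives) of $u$, exploiting that differentiation commutes with $-\Delta$ and preserves almost periodicity of the data. Concretely, fix $i\in\{1,\dots,n\}$ and set $v := \p u/\p x_i$, understood a priori only in the distribution sense. Since $u$ solves $-\Delta u = f$ in $\calD'(\mR^n)$, differentiating the identity \eqref{eq:poisson:weak} in $x_i$ shows that $v$ solves $-\Delta v = \p f/\p x_i$ in the distribution sense. The source $\p f/\p x_i$ need not be almost periodic (indeed it need not even be a function), so Theorem~\ref{th:sibuya} cannot be applied to $v$ directly; this is exactly where the work lies.

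The way around this is a localized representation argument using the Green's function on balls, as announced in the abstract. First I would show that $u$ is in fact smooth: interior elliptic regularity (or, concretely, mollifying \eqref{eq:poisson:weak} and using that $f$ is continuous hence locally H\"older after one more mollification step, or simply Weyl-type arguments together with the Newtonian potential of the continuous function $f$) gives $u\in\calC^1(\mR^n)$, so that $v=\p u/\p x_i$ is a genuine continuous function. Boundedness of $v$ is the first quantitative claim: apply the Representation formula \eqref{eq:sol} on the unit ball $\calB^0(\bx_0,1)$ centered at an arbitrary point $\bx_0$, differentiate in the center variable, and estimate the two resulting terms. The boundary term $-\int_{\p\Omega} u(\by)\,\p G/\p\bnu(\bx,\by)\ud S(\by)$ is harmonic in $\bx$, and its $\bx$-gradient at the center is controlled by $\sup|u|$ times a purely geometric constant (interior gradient estimate for harmonic functions); the volume term $\int_\Omega f(\by)G(\bx,\by)\ud\by$ has $\bx$-gradient bounded by $\sup|f|\cdot\int_\Omega|\GRAD_\bx G(\bx,\by)|\ud\by$, and this last integral is finite and uniformly bounded because $|\GRAD_\bx G|\lesssim|\bx-\by|^{1-n}$ is integrable on the ball. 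Hence $\sup_{\mR^n}|v|<\infty$ and $v$ is a bounded continuous function.

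It remains to prove that $v$ is almost periodic, and here I would \emph{not} try to fit $v$ into Theorem~\ref{th:sibuya}, but instead argue directly from almost periodicity of $u$ (which Theorem~\ref{th:sibuya} already grants under the present hypotheses) and of $f$. Given a sequence $\{\bx_k\}\subset\mR^n$, pass to a subsequence along which both $u(\cdot+\bx_k)$ and $f(\cdot+\bx_k)$ converge uniformly on $\mR^n$, to limits $\bar u$ and $\bar f$ respectively; then $\bar u$ is a bounded continuous distributional solution of $-\Delta\bar u=\bar f$. Writing $v(\cdot+\bx_k)=\p u/\p x_i(\cdot+\bx_k)$ and using the differentiated Representation formula on a fixed-radius ball centered at each point, the uniform convergence of the translated $u$ on $\p\Omega$ and of the translated $f$ on $\Omega$, together with the integrable bounds on $\GRAD_\bx G$ and on $\p G/\p\bnu$, yields that $v(\cdot+\bx_k)$ converges uniformly on $\mR^n$ (to $\p\bar u/\p x_i$, which by the same estimates is the relevant limit). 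By definition, $v=\p u/\p x_i$ is almost periodic; since $i$ was arbitrary, all first partials are almost periodic, completing the proof.

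The main obstacle I anticipate is the passage from the distributional identity to the pointwise representation formula \eqref{eq:sol}: that formula is stated for $u\in\calC^2(\bar\Omega)$, whereas here $u$ is only a bounded distributional solution with continuous (but not obviously $\calC^2$) data $f$. Upgrading $u$ to enough smoothness to differentiate under the integral sign, and justifying that the boundary and volume integrals in \eqref{eq:sol} depend continuously (indeed $\calC^1$-smoothly) on the center $\bx$, is the technical heart of the argument, and is presumably what the "technical lemmas" of Section~\ref{Sec:tec_lemma} and the appendix are designed to supply; the almost-periodicity bookkeeping afterwards is then routine given Theorem~\ref{th:sibuya}.
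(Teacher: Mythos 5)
Your proposal is correct in substance and rests on the same two pillars as the paper's proof: Sibuya's theorem to get almost periodicity of $u$ itself, and the Green's-function representation on a unit ball centered at an arbitrary point $\bx_0$, differentiated in the center variable with the kernel bounds $|\p G/\p x_i|\leq C|\bx-\by|^{1-n}$ and the explicit formula for $\p G/\p\bnu$. You also correctly identify the technical heart (justifying the differentiation under the integral sign), which is exactly what Lemma~\ref{lemma:lim} supplies. Where you genuinely diverge is the last step. The paper proves the quantitative statement
\begin{equation*}
\lim_{h\rightarrow 0}\ \sup_{\bx_0\in\mR^n}\Big|\frac{u(\bx_0+h\be_i)-u(\bx_0)}{h}-\frac{\p u}{\p x_i}(\bx_0)\Big|=0,
\end{equation*}
i.e.\ uniform-in-$\bx_0$ convergence of the difference quotients, and then invokes two standard facts: difference quotients of an almost periodic function are almost periodic, and a uniform limit of almost periodic functions is almost periodic. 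You instead verify Bochner's compactness criterion directly: extract a subsequence along which $u(\cdot+\bx_k)$ and $f(\cdot+\bx_k)$ converge uniformly and push it through the differentiated representation formula with its fixed integrable kernels. Both routes are valid; yours is arguably more economical because it only needs the derivative formula at the center of the ball together with the $L^1$ kernel bounds, and never the uniformity of the difference-quotient limit over all base points, whereas the paper's route yields that stronger uniform statement as a by-product (at the cost of the more delicate estimates in Lemma~\ref{lemma:lim}). One small remark: your separate boundedness step for $\p u/\p x_i$ is harmless but redundant, since almost periodicity in the Bochner sense already forces boundedness.
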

\medskip

Theorem~\ref{th:1} generalizes the Sibuya's result in the case that not only $u$ is almost periodic, but also the partial derivatives $\p u/\p x_i$ are almost periodic. The second result of this work is to prove that $u$ does not have to
be a bounded continuous function in the usual sense. We prove that if the weak solution of the Poisson's equation is a bounded generalized function, then it is also a bounded continuous function in the usual sense.

\medskip
\begin{theorem}\label{th:2}
  Let $u$ be a bounded generalized function in $\mR^n$ which solves equation \eqref{eq:poisson} in the distribution sense. Then
  $u$ is a continuous and bounded function in $\mR^n$. 
  \end{theorem}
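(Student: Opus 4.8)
The plan is to reduce the statement to the local regularity theory for the Laplacian, using the hypothesis only to upgrade the a priori regularity of $u$ from ``distribution'' to ``bounded continuous.'' First I would localize: fix a cutoff $\zeta \in \calD(\mR^n)$ with $\zeta \equiv 1$ on a ball $\calB(0,1)$, and observe that $u = \zeta u + (1-\zeta) u$. The point of the bounded-generalized-function hypothesis is that for any $\varphi \in \calD(\mR^n)$ the mollification $u_\varepsilon := u * \varphi_\varepsilon$ is a genuine bounded function, uniformly in $\varepsilon$ on a fixed bounded set; moreover $-\Delta u_\varepsilon = f_\varepsilon := f * \varphi_\varepsilon$, and since $f$ is almost periodic (hence bounded and uniformly continuous), $f_\varepsilon \to f$ uniformly on compact sets with uniform bounds. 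So the strategy is to prove the desired conclusions for each $u_\varepsilon$ with constants independent of $\varepsilon$, and then pass to the limit.

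The key analytic step is an interior estimate. On a ball $\calB^0(\bx_0, 2)$ write, for the smooth function $u_\varepsilon$, the representation formula \eqref{eq:sol} (after translating and rescaling so the ball is $\calB^0(\bx_0,1)$):
\begin{equation}\label{eq:rep-eps}
  u_\varepsilon(\bx) = -\int_{\p\Omega} u_\varepsilon(\by)\, \frac{\p G}{\p\bnu}(\bx,\by)\, \ud S(\by)
  + \int_{\Omega} f_\varepsilon(\by)\, G(\bx,\by)\, \ud \by,
\end{equation}
valid since $u_\varepsilon \in \calC^\infty$. The boundary term is controlled by $\sup_{\p\Omega}|u_\varepsilon|$, which is bounded uniformly in $\varepsilon$ by the bounded-generalized-function hypothesis (choosing $\varphi$ to be a fixed mollifier); in fact the Poisson kernel $-\p G/\p\bnu$ is a probability density on $\p\Omega$ for each interior $\bx$, so this term is bounded by $\|u_\varepsilon\|_{L^\infty(\p\Omega)}$ pointwise. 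The volume term is controlled by $\|f_\varepsilon\|_{L^\infty(\Omega)} \cdot \sup_\bx \int_\Omega |G(\bx,\by)|\,\ud\by < \infty$, since $G$ is integrable in $\by$ uniformly in $\bx \in \Omega$. This already gives a uniform $L^\infty$ bound on $u_\varepsilon$ on the interior half-ball, hence — passing to the limit $\varepsilon \to 0$ in the sense of distributions — a genuine $L^\infty_{\mathrm{loc}}$ bound on $u$, and by a covering argument a global $L^\infty(\mR^n)$ bound. For continuity, I would differentiate under the integral in \eqref{eq:rep-eps}: both the Poisson-kernel term and the Newtonian-potential term are continuous (indeed $\calC^1$) functions of $\bx$ in the open half-ball, with moduli of continuity depending only on $\|u_\varepsilon\|_{L^\infty(\p\Omega)}$ and $\|f_\varepsilon\|_{L^\infty(\Omega)}$ — both uniform in $\varepsilon$. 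Hence $\{u_\varepsilon\}$ is equicontinuous and uniformly bounded on compact sets; by Arzel\`a--Ascoli a subsequence converges locally uniformly to a continuous limit, which must coincide with $u$ as a distribution. Therefore $u$ is (equal a.e.\ to) a continuous bounded function.

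The main obstacle I anticipate is making the passage between ``$u$ as a distribution'' and ``$u_\varepsilon$ as a classical function in the representation formula'' fully rigorous: the representation formula as stated requires $u \in \calC^2(\bar\Omega)$, so it legitimately applies to $u_\varepsilon$, but one must check that $-\Delta u_\varepsilon = f_\varepsilon$ holds in the classical sense (this follows by mollifying the weak equation \eqref{eq:poisson:weak}) and that the traces $u_\varepsilon|_{\p\Omega}$ are controlled by the bounded-generalized-function hypothesis uniformly — the subtlety being that the hypothesis controls $u*\varphi$ for each \emph{fixed} $\varphi$, so one fixes a single mollifier $\varphi$ once and for all rather than letting it vary with the scale of the ball. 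A secondary technical point is the uniformity of the constants across the covering of $\mR^n$: because Poisson's equation is translation-invariant and $f$ is uniformly bounded on all of $\mR^n$, the interior estimate on $\calB^0(\bx_0,1)$ has a constant independent of $\bx_0$, so the local bounds patch together to the claimed global conclusion. Once continuity and boundedness are established, I would (if desired) note that Theorem~\ref{th:sibuya} and Theorem~\ref{th:1} then apply verbatim to yield almost periodicity of $u$ and of its first derivatives, though the statement of Theorem~\ref{th:2} asks only for continuity and boundedness.
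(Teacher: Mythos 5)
The central step of your argument --- the claim that $u_\e := u*\omega_\e$ is bounded \emph{uniformly in} $\e$ because $u$ is a bounded generalized function --- does not follow from the definition and is false for general bounded generalized functions. The definition guarantees $\sup_{\bx}|(u*\varphi)(\bx)|<\infty$ for each \emph{fixed} $\varphi\in\calD(\mR^n)$, with a bound depending on $\varphi$; it gives no uniformity over the shrinking family $\omega_\e$. (For instance, if $u=\p g/\p x_1$ with $g$ bounded but not Lipschitz, then $u$ is a bounded generalized function while $\|u*\omega_\e\|_{L^\infty}=\|g*\p_{x_1}\omega_\e\|_{L^\infty}$ is in general of order $\e^{-1}$.) Your parenthetical fix --- ``fix a single mollifier $\varphi$ once and for all'' --- does not repair this: with a fixed $\varphi$ you control $u*\varphi$, not $u$, and to recover $u$ you must let the mollifier shrink, at which point the uniform bound on the boundary term $\int_{\p\Omega}u_\e\,\frac{\p G}{\p\bnu}\,\ud S$ is exactly what is missing. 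Since your equicontinuity estimate also rests on $\sup_{\p\Omega}|u_\e|$, both the boundedness and the continuity halves of the argument carry this gap; and deducing the uniform bound from elliptic regularity would be circular, since that is essentially what the theorem asserts.

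The paper's proof is built precisely to avoid evaluating $u_\e$ on a sphere. It multiplies the representation formula on $\calB(0,r)$ by a weight $\varphi(r)r^{n-1}$ vanishing near $r=0$ and $r=R$ and integrates over $r\in[0,R]$, so that the surface integral of $u_\e$ becomes a volume integral $\int\psi(\by)u_\e(\by)\,\ud\by$ against the single fixed test function $\psi(\by)=\varphi(|\by|)/|\by|^{n-1}\in\calD(\mR^n)$. By the Fubini-type Lemma~A.2 this equals $v_\e(0)$ with $v=u*\psi$, which \emph{is} controlled by the bounded-generalized-function hypothesis (it is smooth and bounded, so $v_\e(0)\to v(0)$), and the source term $I_3(f_\e)$ converges as well; translating then gives the result at every point. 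To salvage your approach you would need some device of this kind that converts the boundary data into a pairing of $u$ with a fixed element of $\calD(\mR^n)$ before passing to the limit $\e\to 0$.
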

\medskip

In Section~\ref{sec:proof:th:1} we present details of the proof of Theorem~\ref{th:1}, and in Section~\ref{sec:proof:th:2} we give the proof of Theorem~\ref{th:2}.  Before proving our main theorems we need to prove several technical lemmas which are given in the next section.

\subsection{Technical lemmas}\label{Sec:tec_lemma}
First, we prove that if $u$ is a solution of the Poisson's equation in the distribution sense then it can be written in the form of the representation formula:

\medskip
\begin{lemma}\label{lemma:u:weak}
  Let $u(\bx)$ be a bounded continuous function of $\bx$ in $\mR^n$ that solves \eqref{eq:poisson} in the sense of distribution. Then, the representation formula \eqref{eq:sol} holds for $u$.
\end{lemma}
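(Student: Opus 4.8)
The plan is to reduce the claim to the classical representation formula (the Representation formula theorem above) by a standard mollification argument, and then pass to the limit. Fix $\bx_0 \in \mR^n$ and, without loss of generality after translation, take $\Omega = \calB^0(\bx_0,1)$. Let $\eta_\e$ be a standard mollifier supported in $\calB(0,\e)$ and set $u_\e := u * \eta_\e$. Since $u$ solves $-\Delta u = f$ in the distribution sense and $f$ is continuous (indeed almost periodic, hence uniformly continuous on $\mR^n$), we have $-\Delta u_\e = f * \eta_\e =: f_\e$ pointwise, and $u_\e \in \calC^\infty(\mR^n)$. In particular $u_\e \in \calC^2(\bar\Omega)$, so the representation formula applies to $u_\e$ on $\Omega$:
\begin{equation}\label{eq:moll:rep}
  u_\e(\bx) = -\int_{\p\Omega} u_\e(\by)\,\frac{\p G}{\p\bnu}(\bx,\by)\ud S(\by)
  + \int_{\Omega} f_\e(\by)\, G(\bx,\by)\ud\by, \qquad \bx\in\Omega.
\end{equation}

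Next I would pass to the limit $\e \to 0^+$ in each term of \eqref{eq:moll:rep}. On the left, $u_\e \to u$ locally uniformly because $u$ is continuous; in particular $u_\e(\bx)\to u(\bx)$ for each fixed $\bx\in\Omega$. For the boundary integral, $u_\e \to u$ uniformly on the compact set $\p\Omega$, and $\frac{\p G}{\p\bnu}(\bx,\cdot)$ is a fixed integrable (indeed bounded, for fixed interior $\bx$) function on $\p\Omega$, so the boundary term converges to $-\int_{\p\Omega} u(\by)\frac{\p G}{\p\bnu}(\bx,\by)\ud S(\by)$. For the volume integral, $f_\e \to f$ uniformly on $\bar\Omega$ (since $f$ is continuous on the compact set $\bar\Omega$), while $G(\bx,\cdot)\in \uL^1(\Omega)$ for fixed $\bx$ — the singularity of $\Phi$ at $\by=\bx$ is integrable in dimension $n\ge 2$ — so $\int_\Omega f_\e(\by)G(\bx,\by)\ud\by \to \int_\Omega f(\by)G(\bx,\by)\ud\by$. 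Combining these limits yields \eqref{eq:sol} for $u$ at the point $\bx$, and since $\bx\in\Omega$ was arbitrary and $\Omega$ was an arbitrary unit ball, the lemma follows.

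The main technical point to justify carefully is that mollification genuinely commutes with the distributional Laplacian here, \ie that $-\Delta(u*\eta_\e) = f*\eta_\e$ as honest functions: this is the identity $(\Delta u)*\eta_\e = u*(\Delta\eta_\e)$ together with the defining relation \eqref{eq:poisson:weak} tested against $\phi(\by) = \eta_\e(\bx-\by)$, which lies in $\calD(\mR^n)$. A secondary point is the uniform convergence $f_\e\to f$ on $\bar\Omega$, which uses only continuity of $f$ on a compact set; almost periodicity is not needed for this lemma beyond the continuity it entails. No delicate estimate on the singularity of $G$ is required beyond its local integrability, so I do not anticipate a serious obstacle — the argument is a routine approximation, and the only care needed is bookkeeping of which convergences are uniform and which integrands are fixed.
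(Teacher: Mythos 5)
Your proposal is correct and follows essentially the same route as the paper: mollify $u$, obtain $-\Delta u_\e = f_\e$ by testing the distributional equation against $\phi(\by)=\eta_\e(\bx-\by)$, apply the classical representation formula to the smooth $u_\e$, and pass to the limit using the integrability of $G(\bx,\cdot)$ and $\frac{\p G}{\p\bnu}(\bx,\cdot)$. If anything, your observation that only \emph{locally} uniform convergence of $u_\e$ (which follows from continuity alone on the compact sets $\p\Omega$ and $\bar\Omega$) is needed is slightly cleaner than the paper's appeal to global uniform convergence.
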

\begin{proof}
  We use the definition and properties of a standard mollifier. 
  Let $\omega$ be a standard mollifier defined as
  \begin{equation}
    \omega(\bx) = 
    \begin{cases}
      c\, \exp\Big( \frac{1}{|\bx|^2-1} \Big), \quad &|\bx| < 1,  \\
      0, \quad &|\bx| \geq 1,  
    \end{cases}
  \end{equation}
such that $\int_{|\bx| < 1} \omega(\bx) \ud \bx = 1$. Further, we define
\[
\omega_{\e}(\bx) = \frac{1}{\e^n} \omega\Big(\frac{\bx}{\e} \Big),
\]
and set $u_{\e} := \omega_{\e} * u$ and  $f_{\e} := \omega_{\e} * f$.

The idea of proof consist of two steps: first, we show that the following equality holds
\begin{equation}\label{eq:pe}
-\Delta u_{\e}(\bx) = f_{\e}(\bx).
\end{equation}
Then, we pass into the limit when $\e \rightarrow 0$, and use a uniform convergence property of the mollifier to get the 
desired result.

Using the definition of mollifier, we have
\begin{align*}
-\Delta u_{\e}(\bx) =& -\int_{\mR^n} \big( \Delta \omega_{\e}(\bx - \by) \big) u(\by) \ud \by, \\
f_{\e}(\bx) =& \int_{\mR^n} \omega_{\e}(\bx - \by) f(\by) \ud \by.
\end{align*}
Now, for a fixed $\bx$ we set $\varphi(\by) = \omega_{\e}(\bx - \by)$ and obtain
\[
-\int_{\mR^n} \Delta \varphi(\by) u(\by) \ud \by = \int_{\mR^n}  \varphi(\by) f(\by) \ud \by,
\]
which is in fact true, since $u$ solves the equation \eqref{eq:poisson} in the distribution sense. Therefore we conclude that \eqref{eq:pe} holds and since $u_{\e}$ is a smooth function the representation formula \eqref{eq:sol} can be written as
\begin{equation}\label{eq:sol:e}
  u_{\e}(\bx) = -\int_{\p\Omega} u_{\e}(\by) \frac{\p G}{\p\bnu}(\bx,\by) \ud S(\by)
  + \int_{\Omega} f_{\e}(\by) G(\bx,\by) \ud \by.
\end{equation}

Next, we write the following relation
\begin{align*}
  u_{\e}(\bx) - u(\bx) =& 
  \int_{\mR^n} \omega_{\e}(\bx - \by) u(\by) \ud \by - u(\bx) \\
  =& \int_{\mR^n} \frac{1}{\e^n}\omega\Big(\frac{\bx-\by}{\e} \Big) u(\by) \ud \by  - u(\bx).
\end{align*}
Let us denote $-\bxi = \frac{\bx - \by}{\e}$, then $\ud \by = \e^n \ud \bxi$ and
\begin{align*}
  u_{\e}(\bx) - u(\bx)  
  =& \int_{\mR^n} \omega(\bxi) u(\bx + \e\bxi) \ud \bxi  - u(\bx) \int_{\mR^n} \omega(\bxi) \ud \bxi \\
  =& \int_{\mR^n} \omega(\bxi) \big( u(\bx + \e\bxi) - u(\bx) \big) \ud \bxi.
\end{align*}
Consequently, we get that 
\begin{align*}
  |u_{\e}(\bx) - u(\bx)| \leq \sup_{\bx\in \mR^n, |\bxi| \leq 1} \big| u(\bx + \e\bxi) - u(\bx) \big|.  
\end{align*}
From here we conclude that if the right hand side  
goes to zero as  $\e \rightarrow 0$, then 
$u_{\e} \rightarrow u$ uniformly and analogously $f_{\e} \rightarrow f$. 
Now, passing to the limit in \eqref{eq:sol:e} as $\e \rightarrow 0$
we obtain the desired result.
\end{proof}

We next prove that a weak solution of the Poisson's equation in an
open unit ball has continuous partial derivatives in the ball.

\medskip
\begin{lemma}\label{lemma:u}
  Let a continuous in the unit ball $\calB(0,1) = \{\bx: |\bx| \leq 1 \}$
  function $u(\bx)$ be a solution of equation  \eqref{eq:poisson}
  in the distribution sense in $\calB^0(0,1) = \{\bx: |\bx| < 1 \}$.
  Then $u(\bx)$ 
  is continuously differentiable for all $\bx \in \calB^0(0,1)$. 
\end{lemma}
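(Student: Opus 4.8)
The plan is to reduce the statement to the representation formula already established in Lemma \ref{lemma:u:weak} and then differentiate under the integral sign on the two pieces separately. First I would fix an arbitrary point $\bx_0 \in \calB^0(0,1)$ and a radius $r < 1$ with $\bx_0 \in \calB^0(0,r)$, and apply the scaled version of Lemma \ref{lemma:u:weak} on the ball $\calB^0(0,r)$ (rescaling by $\by \mapsto \by/r$ turns this into the unit-ball statement, since both continuity of $u$ and the distributional equation $-\Delta u = f$ are preserved up to the obvious scaling of $f$). This gives, for $\bx \in \calB^0(0,r)$,
\begin{equation*}
  u(\bx) = -\int_{\p\calB^0(0,r)} u(\by) \frac{\p G_r}{\p\bnu}(\bx,\by) \ud S(\by)
  + \int_{\calB^0(0,r)} f(\by)\, G_r(\bx,\by) \ud \by,
\end{equation*}
where $G_r$ is Green's function for the ball of radius $r$. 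Call the first term $u_1(\bx)$ and the second term $u_2(\bx)$.

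For $u_1$, the key observation is that for $\bx$ ranging in the slightly smaller ball $\calB^0(0,\rho)$ with $\rho < r$, and $\by$ on the sphere $\p\calB^0(0,r)$, the kernel $\p G_r/\p\bnu(\bx,\by)$ is a smooth function of $\bx$ (it is, up to constants, the Poisson kernel, which is real-analytic in $\bx$ away from the boundary sphere). Hence $u_1$ is $\calC^\infty$ — in particular continuously differentiable — on $\calB^0(0,\rho)$, with derivatives obtained by differentiating the kernel under the integral sign; the bounded continuous factor $u(\by)$ causes no difficulty since the integration is over the compact sphere.

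For $u_2$, I would write $G_r(\bx,\by) = \Phi(\by-\bx) + h_r(\bx,\by)$, where $h_r$ is the harmonic corrector term; $h_r$ is smooth in $\bx$ on $\calB^0(0,\rho)$ uniformly in $\by \in \calB^0(0,r)$, so the corresponding integral contributes a $\calC^1$ (indeed $\calC^\infty$) function just as for $u_1$. The remaining piece is the Newtonian-type potential $\int_{\calB^0(0,r)} f(\by)\,\Phi(\by-\bx)\ud\by$. Since $f$ is continuous (being almost periodic in the applications, but here we only need boundedness and measurability on the bounded ball), this is a standard computation: the formal derivative $\int f(\by)\, \p_{x_i}\Phi(\by-\bx)\ud\by$ has a kernel with an integrable singularity of order $|\by-\bx|^{-(n-1)}$, so it converges absolutely and locally uniformly in $\bx$, and a routine argument (splitting off a small ball around the singularity and using dominated convergence on the difference quotients) shows it genuinely equals $\p_{x_i} u_2$ and is continuous in $\bx$. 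Assembling the three pieces gives that $u = u_1 + u_2$ is continuously differentiable on $\calB^0(0,\rho)$, and since $\bx_0$ and $\rho$ were arbitrary, on all of $\calB^0(0,1)$.

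The main obstacle is the singular integral in $u_2$: one must justify differentiating past the singularity of $\Phi$. This is classical for the Newtonian potential with continuous density, but it does require care — the naive bound on $|\nabla\Phi|$ is only borderline-integrable-safe (exponent $n-1 < n$), so the argument needs the explicit splitting of the domain into a shrinking ball around $\bx$ plus its complement, together with the continuity (or at least local boundedness) of $f$ to control the near-singularity contribution uniformly. Everything else — the smoothness of the Poisson kernel and of the harmonic corrector $h_r$ away from the boundary, and the rescaling to the unit ball — is routine.
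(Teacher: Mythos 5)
Your proposal is correct and follows essentially the same route as the paper: both start from the Green's-function representation formula on a ball (obtained by mollification as in Lemma~\ref{lemma:u:weak}), observe that the Poisson-kernel boundary term is smooth in $\bx$ in the interior, and differentiate the volume term under the integral sign using the integrable $|\by-\bx|^{-(n-1)}$ singularity of $\p G/\p x_i$. The only differences are minor: you work on a ball of radius $r<1$ and invoke the classical Newtonian-potential splitting argument, while the paper computes $\p G/\p x_i$ explicitly on the unit ball and carries out the rigorous difference-quotient justification separately in its Lemma~\ref{lemma:lim}.
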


\begin{proof}
Using the formulation of solution \eqref{eq:sol} for $\bx \in \calB^0(0,1)$ and 
the definition of the Green's function \eqref{eq:G(x,y)} we get
\begin{equation}\label{eq:Gnu}
  \begin{aligned}
    \frac{\p G}{\p\bnu}(\bx,\by) =& 
    \sum_{i=1}^n y_i G_{y_i}(\bx, \by) \\ 
    = &
    -\frac{1}{n\alpha(n)}
    \frac{1}{|\bx-\by|^n}
    \sum_{i=1}^{n} y_i \big( (y_i - x_i) - y_i |\bx|^2 + x_i \big) \\
    = &
    -\frac{1}{n\alpha(n)}
    \frac{1-|\bx|^2}{|\bx-\by|^n}.
  \end{aligned}
\end{equation}

This implies that function $\frac{\p G}{\p\bnu}$ is continuously 
differentiable for  
$\bx \in \calB^0(0,1)$ and 
$\by \in \p\calB(0,1)$. 
Hence, the first term in the right hand side of \eqref{eq:sol}
is a differentiable function for $\bx \in \calB^0(0,1)$.

Analogously, from the definition of \eqref{eq:G(x,y)} it
follows that
\begin{equation}\label{eq:Gx}
  \begin{aligned}
    \frac{\p G}{\p x_i} =& \frac{\p\Phi}{\p x_i}(\by-\bx)
                            -\frac{\p\Phi}{\p x_i}(|\bx|(\by-\tilde\bx)) \\
    =&
       \frac{1}{n\alpha(n)} 
       \Big(
       \frac{ y_i-x_i}{|\by-\bx|^n} - 
       \frac{y_i - x_i |\by|^2}{| |\by| \bx - \frac{\by}{|\by|} |^n}
       \Big).
  \end{aligned}
\end{equation}
The first term of the right hand side of
$\frac{\p G}{\p x_i}$
has an integrable singularity when $\by = \bx$.
When $\by \not= 0$, the second term is continuous and bounded by
\[
\l| 
\frac{y_i - x_i |\by|^2}{| |\by| \bx - \frac{\by}{|\by|} |^n}
\r|
\leq \frac{2}{(1-|\bx|)^n},
\] 
which is integrable.
 
We conclude that the function $u(\bx)$ defined 
using the representation formula \eqref{eq:sol} 
in the unit ball $\Omega = \calB^0(0,1)$
\begin{equation}\label{eq:sol:B}
  u(\bx) = -\int_{\p\calB^0(0,1)} u(\by) \frac{\p G}{\p\bnu}(\bx,\by) \ud S(\by)
  + \int_{\calB^0(0,1)} f(\by) G(\bx,\by) \ud \by \quad(\bx \in \calB^0(0,1)),
\end{equation}
is continuously differentiable for $\bx \in \calB^0(0,1)$. 
\end{proof}

\medskip
\begin{remark}
  Note that Lemma \ref{lemma:u} is also true when the function $f\in L^{\infty}(\calB(0,1))$.
\end{remark}
\medskip

In the above lemma, we have proved that $u$ is differentiable if the Green's function and the normal derivative of the Green's function are differentiable with respect to $\bx$. The following lemma completes Lemma~\ref{lemma:u}:

\medskip
\begin{lemma}\label{lemma:lim}
  Let a continuous in the unit ball $\calB(0,1) = \{\bx: |\bx| \leq 1 \}$
  function $u(\bx)$ be a solution of equation  \eqref{eq:poisson}
  in the distribution sense in $\calB^0(0,1) = \{\bx: |\bx| < 1 \}$.
  Then, the following limits hold
  \begin{align}
    \lim_{h\rightarrow 0} \int_{\p\calB(0,1)} 
    \bigg| 
    \frac{\p}{\p x_i} \frac{\p G}{\p\bnu}(0, \by)
    - 
    \frac{\frac{\p G}{\p\bnu}(h\be_i, \by) - \frac{\p G}{\p\bnu}(0, \by)}{h}
    \bigg| \ud S(\by) = 0, \label{eq:lim:1} \\
    \lim_{h\rightarrow 0} \int_{\calB(0,1)} 
    \bigg| 
    \frac{\p G}{\p x_i}(0, \by)
    - 
    \frac{G(h\be_i, \by) - G(0, \by)}{h}
    \bigg| \ud \by = 0. \label{eq:lim:2}
  \end{align}
\end{lemma}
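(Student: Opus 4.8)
The plan is to reduce both statements to a single fact: the difference quotient in each direction $\be_i$ converges, as $h\to0$, to the corresponding partial derivative \emph{in $L^1$ of the relevant measure}, uniformly with respect to the remaining variable. For \eqref{eq:lim:1} the integration is over $\p\calB(0,1)$, where we have the explicit formula \eqref{eq:Gnu}, namely $\frac{\p G}{\p\bnu}(\bx,\by) = -\frac{1}{n\alpha(n)}\frac{1-|\bx|^2}{|\bx-\by|^n}$; since $|\by|=1$ while $|\bx|=|h\be_i|$ stays in a fixed compact subset of $\calB^0(0,1)$ for $h$ small, the integrand is smooth in $\bx$ with all derivatives bounded uniformly in $\by\in\p\calB(0,1)$. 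Thus Taylor's theorem with the Lagrange (or integral) remainder gives a pointwise bound
\[
\bigg|\frac{\p}{\p x_i}\frac{\p G}{\p\bnu}(0,\by)-\frac{\frac{\p G}{\p\bnu}(h\be_i,\by)-\frac{\p G}{\p\bnu}(0,\by)}{h}\bigg|\leq C|h|,
\]
with $C$ independent of $\by$, and integrating over the unit sphere (finite measure) and letting $h\to0$ yields \eqref{eq:lim:1}. This part is essentially routine.

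The real work is \eqref{eq:lim:2}, where the kernel $G(\bx,\by)$ has a singularity at $\by=\bx$, so the integrand is not uniformly smooth and one cannot simply Taylor-expand under the integral. Here I would split $\calB(0,1)$ into a small ball $\calB(0,\delta)$ around the singular point $\bx=0$ and its complement. On the complement $\calB(0,1)\setminus\calB(0,\delta)$, the same Taylor argument as above applies: $G(\bx,\by)$ and its $\bx$-derivatives are bounded uniformly for $|\by|\geq\delta$ and $|\bx|$ small, so that piece contributes $O(|h|)\cdot\mathrm{(measure)}\to0$. On the inner ball $\calB(0,\delta)$ we estimate each of the three terms — $\frac{\p G}{\p x_i}(0,\by)$, $G(h\be_i,\by)/h$, and $G(0,\by)/h$ — separately, using the explicit form of $\Phi$ in \eqref{eq:Phi(x,y)}. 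The singular part of $G$ is $\Phi(\by-\bx)$, which behaves like $|\by-\bx|^{-(n-2)}$ (or $\log$ when $n=2$), so $\frac{\p G}{\p x_i}(0,\by)\sim|\by|^{-(n-1)}$, which is integrable near $0$ in $\mathbb R^n$; hence $\int_{\calB(0,\delta)}\big|\frac{\p G}{\p x_i}(0,\by)\big|\ud\by = o(1)$ as $\delta\to0$. The terms $\frac1h\big(G(h\be_i,\by)-G(0,\by)\big)$ are controlled by the mean value theorem: $\frac1h(G(h\be_i,\by)-G(0,\by)) = \frac{\p G}{\p x_i}(\theta h\be_i,\by)$ for some $\theta\in(0,1)$, and $\big|\frac{\p G}{\p x_i}(\bx,\by)\big|\leq C|\by-\bx|^{-(n-1)}$ uniformly; after a translation the $L^1$-norm of this over the shrinking ball is again $o(1)$ uniformly in $h$ (this uses that $\int_{|\bz|\leq 2\delta}|\bz|^{-(n-1)}\ud\bz\to0$). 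Choosing $\delta$ first to make the inner contribution small, then $h$ small to kill the outer Taylor remainder, gives \eqref{eq:lim:2}.

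The main obstacle is the uniform-in-$h$ control of the difference quotient of $G$ near its singularity: one must be careful that the intermediate point $\theta h\be_i$ in the mean value theorem does not come arbitrarily close to the singular set in a way that worsens the bound. This is handled by the uniform gradient estimate $|\nabla_{\bx}\Phi(\by-\bx)|\lesssim|\by-\bx|^{-(n-1)}$ together with the observation that for $\by$ in the annulus $\delta\leq|\by|\leq 1$ and $|h|\leq\delta/2$ we have $|\by-\theta h\be_i|\geq|\by|/2$, keeping us safely away from the pole; and for $\by$ in the inner ball we accept the singularity but absorb it using that $|\bz|^{-(n-1)}$ is locally integrable. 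One should also note, as the preceding remark already does for Lemma~\ref{lemma:u}, that the argument only uses $f\in L^\infty$ and the explicit structure of $G$, not almost periodicity, so \eqref{eq:lim:2} is a purely potential-theoretic statement.
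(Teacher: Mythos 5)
Your treatment of \eqref{eq:lim:1} (Taylor expansion of a kernel that is uniformly smooth for $\by$ on the unit sphere and $\bx$ near $0$) and your decomposition of $\calB(0,1)$ into $\calB(0,\delta)$ and its complement for \eqref{eq:lim:2} coincide with the paper's proof; the outer region and the term $\int_{\calB(0,\delta)}|\p G/\p x_i(0,\by)|\ud\by$ are handled correctly. The gap is in your control of the difference quotient of $G$ on the inner ball. The mean value theorem gives $\frac1h\big(G(h\be_i,\by)-G(0,\by)\big)=\frac{\p G}{\p x_i}(\theta h\be_i,\by)$ with $\theta=\theta(\by)$ depending on the integration variable, so ``after a translation'' is not a legitimate change of variables: you cannot translate by a shift that varies with $\by$. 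The only majorant uniform in the unknown $\theta(\by)$ is $C\sup_{\theta\in[0,1]}|\by-\theta h\be_i|^{-(n-1)}=C\,\mathrm{dist}\big(\by,[0,h\be_i]\big)^{-(n-1)}$, and this is \emph{not} integrable: near a one-dimensional segment of length $|h|$ the integral behaves like $\int_0^{|h|}\big(\int_0^{c}r^{-(n-1)}r^{n-2}\ud r\big)\ud l=\int_0^{|h|}\big(\int_0^{c}r^{-1}\ud r\big)\ud l$, which diverges logarithmically in every dimension $n\ge 2$. (In addition, for $\by$ on the segment itself $G(\cdot,\by)$ is not differentiable along the whole segment, so the MVT does not apply there; that set has measure zero and is harmless, but the integrability failure above is not.)

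The repair is standard but must be stated: replace the Lagrange form by the integral form, $\frac1h\big(G(h\be_i,\by)-G(0,\by)\big)=\int_0^1\frac{\p G}{\p x_i}(\theta h\be_i,\by)\ud\theta$, and apply Fubini (or Minkowski's integral inequality), so that for each \emph{fixed} $\theta$ the translation is legitimate and $\int_{\calB(0,\delta)}|\by-\theta h\be_i|^{-(n-1)}\ud\by\le n\alpha(n)(\delta+h)$; integrating over $\theta\in[0,1]$ then yields the required $O(\delta+h)$ bound, after which your ``first $\delta$, then $h$'' conclusion goes through. The paper avoids the issue by a purely algebraic route: Lemma~A\ref{lemma:ab} bounds $\big|\,|h\be_i-\by|^{-(n-2)}-|\by|^{-(n-2)}\big|$ directly by $\frac{n-2}{2}\,h\,\big(|h\be_i-\by|^{-(n-1)}+|\by|^{-(n-1)}\big)$, i.e.\ by $h$ times two weakly singular kernels with \emph{fixed} centres $0$ and $h\be_i$, each of which integrates over $\calB(0,\delta)$ to $O(\delta+h)$. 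Either fix makes your argument complete; as written, this step fails.
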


\begin{proof}
  By noting that $\by \in \p\calB(0,1)$ we have
  \begin{equation}
    \begin{aligned}
      \frac{\p}{\p x_i} \frac{\p G}{\p\bnu}(0,\by) 
      = &
      \frac{\p}{\p x_i} 
      \l(
      \frac{1}{n\alpha(n)}
      \frac{1-|\bx|^2}{|\bx-\by|^n}
      \r) \Big|_{\bx = 0} \\
      =& \frac{1}{n\alpha(n)}
      \l(
      \frac{2 x_i}{|\bx-\by|^n} - \frac{n(1-|\bx|^2)(x_i - y_i)}{|\bx-\by|^{n+2}}
      \r) \Big|_{\bx = 0} = \frac{y_i}{\alpha(n)}.
    \end{aligned}
  \end{equation}

  For the second term of \eqref{eq:lim:1} we get
  \begin{equation}
    \begin{aligned}
      \frac{1}{h}
      \l( 
      \frac{\p G}{\p\bnu}(h\be_i, \by)\r.& - \l. \frac{\p G}{\p\bnu}(0, \by)
      \r) \\ 
      =&
      \frac{1}{n\alpha(n)} \frac{1}{h} 
      \l(
      \frac{1-h^2}{|h\be_i -\by|^n} - 1
      \r) \\
      =&
      \frac{1}{n\alpha(n)} \frac{1}{h} 
      \l(
      - \frac{h^2}{|h\be_i -\by|^n} + \frac{1 - |h\be_i -\by|^n}{|h\be_i -\by|^n}
      \r).      
    \end{aligned}
  \end{equation}
  Now using the Taylor expansion 
  $
  |h\be_i - \by|^n = (|h\be_i - \by|^2)^{n/2} = (h^2 - 2h y_i + |\by|^2)^{n/2}
  = (1 - h(2 y_i + h))^{n/2} = 1 - \frac{n}{2} h (2 y_i + h) + \calO(h^2)
  = 1 - n h y_i + \calO(h^2)
  $, we obtain
  \begin{equation}
    \begin{aligned}
      \frac{1}{h}
      \l( 
      \frac{\p G}{\p\bnu}(h\be_i, \by)\r.& - \l. \frac{\p G}{\p\bnu}(0, \by)
      \r) \\ 
      =&
      \frac{1}{n\alpha(n)} \frac{1}{h} 
      \l(
       \frac{1 - \l(1 - n h y_i + \calO(h^2)\r)}{|h\be_i -\by|^n}
       - \frac{h^2}{|h\be_i -\by|^n}
      \r). \\      
      =&
      \frac{1}{n\alpha(n)} 
      \l(
       n  y_i +   \calO(h)
      \r)
      = 
      \frac{y_i}{\alpha(n)} + \calO(h).
    \end{aligned}
  \end{equation}
Consequently, the function inside the first integral of \eqref{eq:lim:1} converges uniformly 
to 0 with respect to $\by \in \p \calB(0,1)$.

Let us show that the second limit \eqref{eq:lim:2} is true. We start by splitting the 
unit ball $\calB(0,1)$ as the sum of two sets $\calB(0,\delta)$ and 
$\calB(0,1) - \calB(0,\delta)$, where $0<\delta<1$. Then for the first 
ball we have 
\begin{align*}
  \int_{\calB(0,\delta)} & 
  \l| 
  \frac{\p G}{\p x_i}(0, \by)
  - 
  \frac{G(h\be_i, \by) - G(0, \by)}{h}
  \r| \ud \by  \\
  \leq& \,
        \int_{\calB(0,\delta)} 
        \l| 
        \frac{\p G}{\p x_i}(0, \by)
        \r| \ud \by 
        + \frac1h 
        \int_{\calB(0,\delta)} 
        \l|
        G(h\be_i, \by) - G(0, \by)
        \r| \ud \by \\
  =:& \, I_1 + I_2.
\end{align*}
 
Using the formula \eqref{eq:Gx} at the point $(0, \by)$ we compute the integral $I_1$:
\begin{align*}
  I_1 = 
  \frac{1}{n\alpha(n)}\int_{\calB(0,\delta)} 
  \l| 
  \frac{y_i}{|\by|^n} - y_i
  \r| \ud \by 
  =& \,
  \frac{1}{n\alpha(n)}\int_{\calB(0,\delta)} 
  |y_i| 
  \l| 
  \frac{1- |\by|^n}{|\by|^n}
  \r| \ud \by \\
  \leq& \,
  \frac{1}{n\alpha(n)}\int_{\calB(0,\delta)} 
  \frac{1}{|\by|^{n-1}}
  \ud \by.
\end{align*}
We apply the definition of $G(\bx, \by)$ in \eqref{eq:G(x,y)} at the points
$(0, \by)$ and $(h\be_i, \by)$ and obtain
\[
I_2 \! = \!
\frac{1}{n(n-2)\alpha(n)}
\frac1h 
\int_{\calB(0,\delta)} \! 
\bigg| \!
\bigg(
\frac{1}{|h\be_i - \by|^{n-2}} - \frac{1}{|\by|^{n-2}}
\bigg)
+
\bigg(
1 - \frac{1}{\big| |\by| (h\be_i - \frac{\by}{|\by|^2}) \big|^{n-2}}
\bigg) \!
\bigg| 
\ud \by.
\]
Now we make use of the inequality of Lemma~A\ref{lemma:ab} to estimate expressions inside the brackets. By setting 
$a = 1/|h\be_i - \by|$ and $b = \frac{1}{|\by|}$ in inequality \eqref{eq:ab} and by using the fact that 
$|a| - |b| \leq |a-b|$ we can easily get 
\begin{align*}
  \bigg| 
  \frac{1}{|h\be_i - \by|^{n-2}} - \frac{1}{|\by|^{n-2}}
  \bigg|
  \leq& \,
        \frac{n-2}{2}
        \big|
        |\by| - |h\be_i - \by|
        \big|
        \bigg(
        \frac{1}{|h\be_i - \by|^{n-1}} + \frac{1}{|\by|^{n-1}}
        \bigg) \\
  \leq& \,
        \frac{n-2}{2}
        h 
        \bigg(
        \frac{1}{|h\be_i - \by|^{n-1}} + \frac{1}{|\by|^{n-1}}
        \bigg).
\end{align*}

Analogously, we set $a = 1$ and $b = 1/\big| |\by| h\be_i - \by/|\by| \big|$,
and by using the fact that
$
\big| |\by| h \be_i - \frac{\by}{|\by|} \big|
\geq \big| \frac{\by}{|\by|} - \big| |\by| h\be_i \big| 
\geq 1 - h
$ 
we get
\begin{align*}
  \bigg|
  1 - \frac{1}{\big| |\by| (h\be_i - \frac{\by}{|\by|^2}) \big|^{n-2}}
  \bigg|
  \leq& \,
        \frac{n-2}{2} 
        \bigg|
        1 - 
        \Big|
        |\by| h \be_i - \frac{\by}{|\by|}
        \Big|
        \bigg|
        \bigg(
        1 + \frac{1}{\big| |\by| h \be_i - \frac{\by}{|\by| }\big|^{n-1}}
        \bigg)
  \\
  \leq& \,
        \frac{n-2}{2}h \Big(1 + \frac{1}{(1-h)^{n-1}} \Big).
\end{align*}

Hence, we obtain the following estimate for the second integral $I_2$
\begin{equation}
  I_2 \leq 
  \frac{1}{2n\alpha(n)}
  \int_{\calB(0,\delta)} 
  \Bigg(
  \frac{1}{|h\be_i - \by|^{n-1}} + \frac{1}{|\by|^{n-1}}
  \Bigg) 
  \ud \by
  +
  \frac{\delta^n}{2n}
  \Big(1 + \frac{1}{(1-h)^{n-1}} \Big).
\end{equation}

From the above estimates for $I_1$ and $I_2$, we observe that the integrals contain two weakly singular integrals
at points $\by = 0$ and $\by = h\be_i$. By requiring that $h < \delta/2$ we guarantee that both points lie inside the ball $\calB(0, \delta)$. Using the standard techniques for computing weakly singular integrals we get 
\[
  \int_{\calB(0,\delta)} 
  \frac{1}{|\by|^{n-1}}
  \ud \by = 
  n\alpha(n) \delta,
\] 
and
\[
  \int_{\calB(0,\delta)} 
  \frac{1}{| h \be_i - \by|^{n-1}}
  \ud \by 
  < 
  \int_{\calB(h\be_i ,\delta + h)} 
  \frac{1}{| h \be_i - \by|^{n-1}}
  \ud \by 
  =
  n\alpha(n) (\delta + h).
\] 
Thus, we obtain the following estimate for the integral \eqref{eq:lim:2}
\begin{equation}
  I_1 + I_2 <
  \delta + \frac{\delta + h }{2} + \frac{\delta}{2} + 
  \frac{\delta^n}{2n}
  \Big(1 + \frac{1}{(1-h)^{n-1}} \Big).
\end{equation}

Eventually, in the second set $\calB(0,1) - \calB(0,\delta)$,
the function inside the integral \eqref{eq:lim:2}
is continuous, has no singular points and converges uniformly to
zero. Therefore, the integral of this function converges to zero.
 
The proof of the lemma is completed.
\end{proof}

\medskip
\begin{remark}
Let $\bx \in \calB^0(0,1)$ be a fixed point. Then, 
as proved in Lemma~\ref{lemma:lim}, one can prove that 
  \begin{align*}
    \lim_{h\rightarrow 0} \int_{\p\calB0,1)} 
    \bigg| 
    \frac{\p}{\p x_i} \frac{\p G}{\p\bnu}(\bx, \by)
    - 
    \frac{\frac{\p G}{\p\bnu}(\bx + h\be_i, \by) - \frac{\p G}{\p\bnu}(\bx, \by)}{h}
    \bigg| \ud S(\by) = 0, \\
    \lim_{h\rightarrow 0} \int_{\calB(0,1)} 
    \bigg| 
    \frac{\p G}{\p x_i}(\bx, \by)
    - 
    \frac{G(\bx + h\be_i, \by) - G(\bx, \by)}{h}
    \bigg| \ud \by = 0. 
  \end{align*}

\end{remark}

\section{Proof of the main theorems}\label{sec:proof:th}
\subsection{Proof of Theorem~\ref{th:1}}\label{sec:proof:th:1}
  Using the result of Theorem~\ref{th:sibuya}, we know that 
  $u$ is an almost periodic function. Lemma~\ref{lemma:u} 
  shows that the function $u(\bx_0+ \bx)$, where $\bx_0$ is a fixed
  point in $\mR^n$, is continuously differentiable, and moreover, 
  for its derivatives $\p u/ \p x_i$, $i=1, \ldots, n$, the following representation
  is true for any $\bx$ in  the unit ball $\calB^0(0,1)$:
  \begin{equation}\label{eq:th:du}
    \begin{aligned}
      \frac{\p u}{\p x_i}(\bx_0 + \bx) 
      =&\, 
      -\int_{\p\calB(0,1)} 
      u(\bx_0+\by) 
      \frac{\p}{\p x_i} \frac{\p G}{\p\bnu}(\bx,\by) \ud S(\by) \\
      &\, + \int_{\calB(0,1)} 
      f(\bx_0 + \by) \frac{\p G}{\p x_i}(\bx,\by) \ud \by.
    \end{aligned}
  \end{equation}

  The aim is to show  that the following relation holds:
  \begin{equation}\label{eq:th:lim}
    \lim_{h\rightarrow 0} \sup_{\bx_0 \in \mR^n} 
    \bigg|
    \frac{u(\bx_0+h\be_i) - u(\bx_0)}{h} - 
    \frac{\p u}{\p x_i}(\bx_0)
    \bigg|
    = 0.
  \end{equation}

  Let us start by writing the representation formula \eqref{eq:sol}
  at the point $(\bx_0 + \bx)$ for any $\bx \in \calB^0(0,1)$:
  \begin{equation}\label{eq:th:sol}
    u(\bx_0 + \bx) = -\int_{\p\calB(0,1)} u(\bx_0+\by) \frac{\p G}{\p\bnu}(\bx,\by) \ud S(\by)
    + \int_{\calB(0,1)} f(\bx_0 + \by) G(\bx,\by) \ud \by.
  \end{equation}
  
  From the relations \eqref{eq:th:du} and \eqref{eq:th:sol} it follows that
  \begin{align*}
    &\bigg|
      \frac{u(\bx_0+h\be_i) - u(\bx_0)}{h} - 
      \frac{\p u}{\p x_i}(\bx_0)
      \bigg| \\
    &\qquad \leq
           \sup_{\bx\in\calB(0,1)} |u(\bx)| 
           \int_{\p\calB(0,1)} 
           \bigg|
           \frac1h 
           \Big(
           \frac{\p G}{\p\bnu}(h\be_i,\by) - \frac{\p G}{\p\bnu}(0,\by)
           \Big) -
           \frac{\p}{\p x_i} \frac{\p G}{\p\bnu}(0,\by) 
           \bigg| 
           \ud S(\by) \\ 
    &\qquad +
        \sup_{\bx\in\calB(0,1)} |f(\bx)| 
        \int_{\calB(0,1)} 
        \bigg|
        \frac1h 
        \Big(
        G(h\be_i,\by) - G(0,\by)
        \Big) -
        \frac{\p G}{\p x_i}(0,\by) 
        \bigg| 
        \ud \by.      
  \end{align*}
  From the last inequality and Lemma~\ref{lemma:lim} the relation 
  \eqref{eq:th:lim} follows. 

  Now, according to Theorem~\ref{th:sibuya}, the function $u(\bx)$ is
  almost periodic and therefore 
  $
  \big(u(\bx_0+h\be_i) - u(\bx_0) \big)/h,
  $
  for $h>0,\, i = 1, \ldots, n$,
  is an almost periodic function. Hence, its limit $\p u/\p x_i(\bx_0)$,
  which is the uniformly continuous limit of this function, is also an almost 
  periodic function.  

  The proof of the theorem is completed.


\subsection{Proof of Theorem~\ref{th:2}}\label{sec:proof:th:2} The proof of the theorem is divided in three steps. First, by assuming $u$ is a continuous bounded function, we obtain the representation formula \eqref{eq:sol:B} for the ball of radius $r$ at the origin $\calB(0,r)$. Then, by the discussion of the proof of Lemma~\ref{lemma:u:weak}, we construct the representation formula for the generalized function $u$ and we prove that $u$ is continuous and bounded in the usual sense at the origin, \ie $\bx=0$. Last, we prove that $u$ is continuous and bounded for every point of $\mR^n$.  

1. Let us assume for the time being that $u$ is a bounded continuous function. 
Let us define the Green's function for a ball of radius $r$, \ie $\calB(0,r) \in \mR^n$:
\begin{equation}\label{eq:G_r}
  G_r(\bx,\by) := \Phi(\by - \bx) - \Phi\l(\frac{|\bx|}{r} (\by - \tilde\bx) \r),
  \quad \big( (\bx, \by) \in \calB(0,r), \bx \not= \by \big),
\end{equation}
where 
$
\tilde\bx = \frac{r^2\bx}{|\bx|^2}.
$
and $\Phi(\bx)$ is defined as in \eqref{eq:Phi(x,y)}.

Then, the representation formula in a ball of radius $r$ can be written as
\begin{equation}\label{eq:sol:r}
  \begin{aligned}
    u(\bx) 
    =&\, 
       -\int_{\p\calB(0,r)} u(\by) \frac{\p G_r}{\p\bnu}(\bx,\by) \ud S(\by)
       + \int_{\calB(0,r)} f(\by) G_r(\bx,\by) \ud \by \\
    =&\, 
       \frac{r^2 - |\bx|^2}{n\alpha(n) r} \int_{\p\calB(0,r)} \frac{u(\by)}{|\bx-\by|^n} \ud S(\by)
       + \int_{\calB(0,r)} f(\by) G_r(\bx,\by) \ud \by.
  \end{aligned}
\end{equation}
Now we multiply \eqref{eq:sol:r} by $\varphi(r)r^{n-1} \not=0$, set
$\bx=0$ and integrate for variable $r\in[0,R]$ for some $R>r$. By
noting that $|\by|=r$ we obtain:
\begin{equation}\label{eq:u0}
  \begin{aligned}
    u(0)  {\underbrace{ \int_0^R \varphi(r)r^{n-1} \ud r }_{I_1}}
    =&\,
    {\underbrace{ \int_0^R \frac{\varphi(r)r^{n}}{n\alpha(n)r^n}
       \int_{\p\calB(0,r)}  u(\by) \ud S(\by) \, \ud r  }_{I_2}}\\
       &\,  + 
       {\underbrace{ \int_0^R \varphi(r)r^{n-1} \int_{\calB(0,r)} f(\by) G_r(0,\by) \ud \by \, \ud r }_{I_3(f)}}.
  \end{aligned}
\end{equation}

Upon passing to the spherical coordinates $(r, \phi_1, \ldots, \phi_{n-1})$, where 
$r$ is the radial distance, $\phi_i$, $i=1,\ldots,n-1$ are angular 
coordinates, and by noting that 
$
  \ud S(\by) = \sin^{n-2}(\phi_1)  \sin^{n-3}(\phi_2)\cdots  \sin^{}(\phi_{n-2})   
  \ud \phi_1 \ud \phi_2 \cdots \ud \phi_{n-2}
$
and 
$
  \ud \by = r^{n-1} \ud S(\by) \ud r
$
we obtain that
\[
I_2 = \frac{1}{n\alpha(n)}\int_{|y| \leq R} \frac{\varphi(|\by|) u(\by)}{ |\by|^{n-1}} \ud \by.
\]
By dividing both parts of \eqref{eq:u0} by $I_1$ we obtain:
\begin{equation}\label{eq:u0y}
  \begin{aligned}
    u(0)  
    =&\,
    \frac{1}{n\alpha(n)}
    \frac{1}{I_1}
    \int_{|y| \leq R} \frac{\varphi(|\by|) u(\by)}{ |\by|^{n-1}} \ud \by
    + \frac{I_3(f)}{I_1}.
  \end{aligned}
\end{equation}
\\

2. Now, let us assume $u$ is a generalized function. 
By repeating the above steps with $u_\e=u*\omega_\e$ 
and it's corresponding source function 
$f_\e=f*\omega_\e$, 
and in addition using Lemma~\ref{lemma:u:weak} 
the following relation can be obtained:
\begin{equation}\label{eq:ue-ud}
  \begin{aligned}
    u_\e(0) 
    =&\,
    \frac{1}{n\alpha(n)}
    \frac{1}{I_1}
    \int_{|y| \leq R} \frac{\varphi(|\by|)}{ |\by|^{n-1}} u_\e(\by) \ud \by +
    \frac{1}{I_1} 
    I_3(f_\e).
  \end{aligned}
\end{equation}

In our construction, the function $\varphi$ is chosen such that
$\varphi(|\by|) = 0$ when $|\by| < \delta$ or 
$|\by|>R-\delta$, where $\delta>0$ is some 
small number.
Then, it is clear that $\psi(\by):=\varphi(|\by|)/|\by|^{n-1}$ is a basic function,
\ie it is an element of $\calD(\mR^n)$. 
Let us denote a function $v(\bx)$ as follows:
\[
v(\bx) := (u*\psi)(\bx) = 
\big( 
u(\by - \bx), \psi(\by) 
\big).   
\]
Now using Lemma~A\ref{lem:fubini} 
the integral at the right hand side of \eqref{eq:ue-ud}
can be simplified as follows:
\begin{equation}\label{eq:ue-ud:I}
  \begin{aligned}
    \int_{|\by| \leq R} &\psi(\by) u_\e(\by) \ud \by  
    =
    \int_{|\by| \leq R} \psi(\by)  
    (u*\omega_\e)(\by) \ud \by \\ 
    =&\,
    \int_{\mR^n} \omega_\e(\by) (u*\psi)(\by)\ud \by 
    =
    \int_{\mR^n} \omega_\e(\by) v(\by) \ud \by
    =
    v_\e(0),
  \end{aligned}
\end{equation}
where $v_\e = v*\omega_\e$.
Using the identity \eqref{eq:ue-ud:I} we rewrite the equation \eqref{eq:ue-ud}
in the following form:
\begin{equation}\label{eq:u0eps0}
  \begin{aligned}
    u_\e(0)
    =
    \frac{1}{n\alpha(n)}
    \frac{1}{I_1}
    v_\e(0)
    +
    \frac{1}{I_1} 
    I_3(f_\e).
  \end{aligned}
\end{equation}
From here it follows that the expression at the right hand side
has a limit as $\e \rightarrow 0$, since $v_\e(0)$ and $f_\e$ have a limit as 
$\e \rightarrow 0$.
Consequently, we conclude that 
there exists a limit $u_\e(0)$ as $\e\rightarrow 0$.

3. In the second part  of the proof, under the assumption of the 
theorem we showed that $u$ is continuous and bounded at point $\bx=0$.
Here we show that it is in fact true for every point of the space.

For a fixed point $\bx_0\in \mR^n$ let us denote $w(\bx):=u(\bx+\bx_0)$,
where $w(\bx)$ is a the solution of equation 
\[
-\Delta w(\bx) = f_{\bx_0}(\bx), \quad \bx\in \mR^n,
\]
in the distribution sense, where we denote $f_{\bx_0}(\bx):=f(\bx+\bx_0)$.
Then, we have that $w_\e(\bx) = u_\e(\bx+\bx_0)$ and 
\[
v(\bx-\bx_0) = (u*\psi)(\bx-\bx_0) = 
\big(
w(\by-\bx),\psi(\by)
\big)
=
(w*\psi)(\bx), 
\]
and the equation \eqref{eq:u0eps0} for $w$ becomes as
\begin{align*}
  w_\e(0)
  =
  \frac{1}{n\alpha(n)}
  \frac{1}{I_1}
  v_\e(-\bx_0)
  +
  \frac{1}{I_1} 
  I_3(f_{\bx_0, \e}),
\end{align*}
which is
\begin{align*}
  u_\e(\bx_0)
  =
  \frac{1}{n\alpha(n)}
  \frac{1}{I_1}
  v_\e(-\bx_0)
  +
  \frac{1}{I_1} 
  I_3(f_{\bx_0, \e}),
\end{align*}
Now, again passing to a limit as $\e\rightarrow 0$ we see that 
$v_\e(-\bx_0)\rightarrow v(-\bx_0)$, 
$f_{\bx_0, \e}\rightarrow f_{\bx_0}$,
and therefore the limit of $u_\e(\bx_0)$ exists 
as 
$\e\rightarrow 0$ for every $\bx_0\in \mR^n$.

The proof of the theorem is completed.

\appendix
\label{Sec:appendix}
  \section*{Appendix}
  \medskip
  \begin{applemma}\label{lemma:ab}
    Let $a$ and $b$ be arbitrary positive numbers. For any $m=1,2,3,\ldots$
    the following inequality holds
    \begin{equation}\label{eq:ab}
      |a^m - b^m| \leq \frac{m}{2} \bigg| \frac1b - \frac1a \bigg| (a^{m+1} + b^{m+1}).
    \end{equation}
  \end{applemma}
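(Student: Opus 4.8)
The plan is to reduce \eqref{eq:ab} to a symmetric polynomial inequality. First I would dispose of the trivial case $a=b$, where both sides vanish, and then assume without loss of generality that $a>b>0$. Writing $\frac1b-\frac1a=\frac{a-b}{ab}$ and using the factorization
\[
a^m-b^m=(a-b)\sum_{j=1}^{m}a^{m-j}b^{j-1},
\]
I would cancel the common positive factor $a-b$ and multiply through by $ab>0$ (both operations are reversible since $a,b>0$ and $a\neq b$), so that \eqref{eq:ab} becomes equivalent to
\[
\sum_{j=1}^{m}a^{m+1-j}b^{j}\leq\frac{m}{2}\bigl(a^{m+1}+b^{m+1}\bigr).
\]

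The second step is a pairwise estimate: for each $j=1,\ldots,m$,
\[
a^{m+1}+b^{m+1}-a^{m+1-j}b^{j}-a^{j}b^{m+1-j}=\bigl(a^{m+1-j}-b^{m+1-j}\bigr)\bigl(a^{j}-b^{j}\bigr)\geq 0,
\]
since both factors on the right have the same sign (both are nonnegative when $a\geq b$). Summing the resulting inequality $a^{m+1-j}b^{j}+a^{j}b^{m+1-j}\leq a^{m+1}+b^{m+1}$ over $j=1,\ldots,m$ and observing that $j\mapsto m+1-j$ permutes $\{1,\ldots,m\}$, the left-hand side equals $2\sum_{j=1}^{m}a^{m+1-j}b^{j}$, while the right-hand side equals $m\bigl(a^{m+1}+b^{m+1}\bigr)$. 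Dividing by $2$ gives exactly the displayed inequality, and tracing back the reductions proves \eqref{eq:ab}.

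I do not expect a genuine obstacle here; the proof is elementary. The only points requiring care are the index bookkeeping in the factorization of $a^m-b^m$, the verification of the quadratic-type identity above, and checking that the $j\leftrightarrow m+1-j$ symmetry is applied correctly when summing. An alternative route would be to invoke AM–GM on each pair $a^{m+1-j}b^{j}$ and $a^{j}b^{m+1-j}$, but the sign-factorization argument is cleaner and self-contained, so that is the one I would carry out.
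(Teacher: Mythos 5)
Your proof is correct. The first reduction is the same as the paper's: factor $a^m-b^m=(a-b)\sum_j a^{m-j}b^{j-1}$, absorb an $ab$ to produce the factor $\frac1b-\frac1a$, and reduce everything to the estimate $\sum_{j=1}^{m}a^{m+1-j}b^{j}\leq\frac{m}{2}(a^{m+1}+b^{m+1})$. Where you diverge is in how that sum is bounded. The paper applies Young's inequality (stated there as H\"older's), $a^{m-k}b^{k+1}\leq\frac{m-k}{m+1}a^{m+1}+\frac{k+1}{m+1}b^{m+1}$ with exponents $p=\frac{m+1}{m-k}$, $q=\frac{m+1}{k+1}$, and then sums the weights to get $\frac{m}{2}$ on each side. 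You instead symmetrize the sum via the involution $j\mapsto m+1-j$ and use the purely algebraic identity $a^{m+1}+b^{m+1}-a^{m+1-j}b^{j}-a^{j}b^{m+1-j}=(a^{m+1-j}-b^{m+1-j})(a^{j}-b^{j})\geq 0$. Both arguments are elementary and yield the same constant $\frac{m}{2}$; yours has the mild advantage of avoiding any appeal to Young/AM--GM and of not requiring the weight sums $\sum_k\frac{m-k}{m+1}$ to be evaluated, at the cost of the explicit symmetrization step. Your handling of the case $a=b$ and of the sign of $a-b$ is careful and reversible, so the reduction is airtight.
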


\begin{proof}
  Using the polynomial identities we have
  \begin{equation}\label{eq:ap:1}
  \begin{aligned}
    a^m - b^m =& (a-b) \sum_{k=1}^{m-1} a^{m-k} b^{k-1} \\
    =& \frac{a-b}{ab} \, ab\sum_{k=1}^{m-1} a^{m-k} b^{k-1} 
       = \Big( \frac1b -\frac1a \Big) \sum_{k=0}^{m-1} a^{m-k} b^{k+1}.
  \end{aligned}
\end{equation}

Next, we need to use the following H\"older's inequality 
\[
dc \leq \frac{d^p}{p} + \frac{c^q}{q}, 
\]
where $d>0, c>0, p>1, p>1 
\text{ and } \frac1p + \frac1q = 1$. By setting $d = a^{m-k}$,
$c = b^{k+1}$, $p=\frac{m+1}{m-k}$ and $q=\frac{m+1}{k+1}$ 
in the H\"older's inequality we get
\[
a^{m-k} b^{k+1} \leq \frac{m-k}{m+1} a^{m+1} + \frac{k+1}{m+1} b^{m+1}.
\]
Now, we apply this inequality for  \eqref{eq:ap:1} and obtain
\begin{align*}
|a^m - b^m| 
\leq& \Big| \frac1b - \frac1a \Big| 
\Big(a^{m+1} \sum_{k=0}^{m-1}\frac{m-k}{m+1} + b^{m+1} \sum_{k=0}^{m-1} \frac{k+1}{m+1} \Big) \\
=& \frac{m}{2} \Big| \frac1b - \frac1a \Big| (a^{m+1} + b^{m+1}),
\end{align*}
which completes the proof of the lemma.
\end{proof}

\medskip
\begin{applemma}\label{lem:fubini}
  Let $u$ be a distribution in $\mR^n$, 
  and let $\varphi(\bx)$ and $\psi(\bx)$ be two basic functions from 
  $\calD(\mR^n)$ such that 
  $
  \varphi(-\bx) = \varphi(\bx)
  $,
  and
  $
  \psi(-\bx) = \psi(\bx)
  $. Then the following equality holds
  \begin{equation}\label{eq:fubini}
    \int_{\mR^n} \varphi(\bx) \, (u * \psi)(\bx) \ud \bx
    =
    \int_{\mR^n} \psi(\bx) \, (u * \varphi)(\bx) \ud \bx
  \end{equation}
\end{applemma}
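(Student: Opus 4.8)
The plan is to reduce the identity to a statement about smooth functions by unwinding the definitions of convolution of a distribution with a test function. Recall that for a distribution $u$ and $\psi \in \calD(\mR^n)$, the convolution $(u*\psi)(\bx) = (u(\by), \psi(\bx-\by))$ is a smooth function of $\bx$, and similarly for $(u*\varphi)(\bx)$. So both sides of \eqref{eq:fubini} are honest integrals of continuous functions against test functions; the question is whether the two iterated pairings agree.

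First I would write out the left-hand side as
\[
\int_{\mR^n} \varphi(\bx)\,(u*\psi)(\bx)\ud\bx
= \int_{\mR^n} \varphi(\bx)\,\big(u(\by),\psi(\bx-\by)\big)\ud\bx.
\]
The key step is to move the outer integration inside the distributional pairing, i.e.\ to justify
\[
\int_{\mR^n} \varphi(\bx)\,\big(u(\by),\psi(\bx-\by)\big)\ud\bx
= \Big(u(\by),\ \int_{\mR^n} \varphi(\bx)\,\psi(\bx-\by)\ud\bx\Big).
\]
This is the analogue of Fubini's theorem for a distribution paired against a parametrized family of test functions, and it is the main obstacle: one must check that $\bx \mapsto \varphi(\bx)\psi(\bx-\by)$ is, as a $\calD(\mR^n)$-valued function of $\bx$, Riemann-integrable (it has compact support in $\bx$, depends continuously and indeed smoothly on $\bx$, and all $\by$-derivatives are jointly controlled), so that the Riemann sums converge in the topology of $\calD$ and the continuous linear functional $u$ commutes with the limit. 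Equivalently, one can invoke the standard fact that $(u*\psi)*\varphi = u*(\psi*\varphi)$ for a distribution and two test functions, evaluated at the origin.

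Once the interchange is justified, the inner integral is just an ordinary convolution of smooth compactly supported functions:
\[
\int_{\mR^n}\varphi(\bx)\,\psi(\bx-\by)\ud\bx = (\varphi * \tilde\psi)(\by),
\qquad \tilde\psi(\bz):=\psi(-\bz).
\]
Using the evenness hypothesis $\psi(-\bz)=\psi(\bz)$ we have $\tilde\psi=\psi$, so this equals $(\varphi*\psi)(\by)$; and since $\varphi$ is also even, $(\varphi*\psi)(\by) = (\psi*\varphi)(\by) = \int_{\mR^n}\psi(\bx)\varphi(\bx-\by)\ud\bx$ by commutativity of convolution and a change of variables. Substituting back gives
\[
\int_{\mR^n}\varphi(\bx)\,(u*\psi)(\bx)\ud\bx
= \big(u(\by),(\psi*\varphi)(\by)\big)
= \int_{\mR^n}\psi(\bx)\,\big(u(\by),\varphi(\bx-\by)\big)\ud\bx,
\]
where in the last equality I apply the same interchange-of-integration step in reverse, now with the roles of $\varphi$ and $\psi$ swapped. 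This yields exactly \eqref{eq:fubini}. The only place where real care is needed is the commutation of $u$ with the integral; everything else is a change of variables and the evenness of the two test functions (which, in fact, is only used to make the bookkeeping symmetric — the identity $(u*\psi)*\varphi = (u*\varphi)*\psi$ holds without it, but stating it for even functions suffices for the application in the proof of Theorem~\ref{th:2}).
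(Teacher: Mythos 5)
Your proposal is correct and follows essentially the same route as the paper, which justifies the interchange of the integral with the distributional pairing by exactly the Riemann-sum argument you sketch (partitioning the common support into simplexes, using continuity of $u*\psi$ and linearity of $u$ on the resulting finite sum, and passing to the limit in $\calD$), and then uses the evenness of $\varphi$ and $\psi$ to identify the two inner integrals. One caveat on your closing parenthesis: evenness is not mere bookkeeping for \eqref{eq:fubini} --- without it the two sides reduce to $\bigl(u(\by),\int_{\mR^n}\varphi(\bx)\,\psi(\bx-\by)\ud\bx\bigr)$ and $\bigl(u(\by),\int_{\mR^n}\varphi(\bx)\,\psi(\bx+\by)\ud\bx\bigr)$, which differ in general, even though the associativity identity $(u*\psi)*\varphi=(u*\varphi)*\psi$ does hold unconditionally.
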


\begin{proof}
  Let us denote the support of functions $\psi$ and $\varphi$
  by $V\subset \mR^n$. Let $V_k$, $k=1,\ldots,N$,
  where $N$ is some finite number,
  be a set of disjoint simplexes such that 
  $\overline{V} = \cup_{k=1,\ldots,N} \overline{V}_k$,
  where $\overline{V}$ and $\overline{V}_k$ denote
  the closure of $V$ and $V_k$, respectively.
  Then for a fixed point $\bx_k \in V_k$, we write 
  the following equality:
  \begin{align*}
    \int_{\mR^n} \varphi(\bx) \, (u * \psi)(\bx) \ud \bx
      =&\, 
      \sum_{k=1}^N\int_{V_k} 
      \big(
      \varphi(\bx) \, (u * \psi)(\bx) 
      -
      \varphi(\bx_k) \, (u * \psi)(\bx_k)
      \big)
      \ud \bx \\
    &\,+
      \sum_{k=1}^N\int_{V_k} \varphi(\bx_k) \, (u * \psi)(\bx_k) \ud \bx.
  \end{align*}

  Since the integrand of the first integral of the right hand side is
  continuous, the first integral goes
  to zero when $k\rightarrow \infty$.  On the other hand, by noting
  that the integrand of the second integral is constant, and using the
  definition of distribution we obtain:
\begin{align*}
  \sum_{k=1}^N\int_{V_k} \varphi(\bx_k) \, (u * \psi)(\bx_k) \ud \bx
  =&\, 
     \Big(
     u(\by), \sum_{k=1}^N \varphi(\bx_k) \, \psi(\bx_k-\by) \,|V_k| 
     \Big),
\end{align*}
where $|V_k|$ denotes the volume $V_k$.

Now, when we pass to the limit when $k\rightarrow \infty$,
we get that
\begin{align}\label{eq:uphi}
  \int_{\mR^n} \varphi(\bx) \, (u * \psi)(\bx) \ud \bx
  =&\, 
     \Big(
     u(\by), \int_{\mR^n} \varphi(\bx) \, \psi(\bx-\by) \ud \bx
     \Big).
\end{align}

Analogously for the right hand side of \eqref{eq:fubini} we 
get
\begin{align}\label{eq:upsi}
  \int_{\mR^n} \psi(\bx) \, (u * \varphi)(\bx) \ud \bx
  =&\, 
     \Big(
     u(\by), \int_{\mR^n} \psi(\bx) \, \varphi(\bx-\by) \ud \bx
     \Big).
\end{align}
Finally, by using the assumption of the lemma, \ie 
$\varphi(-\bx) = \varphi(\bx)$
and 
$\psi(-\bx) = \psi(\bx)$,
we see that 
\begin{equation}
  \begin{aligned}
    \int_{\mR^n} \varphi(\bx) \, \psi(\bx-\by) \ud \bx
    =&\,
    \int_{\mR^n} \varphi(\bx) \, \psi(\by-\bx) \ud \bx \\
    =&\,
    \int_{\mR^n} \psi(\bx) \, \varphi(\by-\bx) \ud \bx 
    =
    \int_{\mR^n} \psi(\bx) \, \varphi(\bx-\by) \ud \bx,
  \end{aligned}
\end{equation}
where we used the property of mollifier. This completes the proof.
\end{proof}


\bigskip

\end{document}